\tikzset{join/.code=\tikzset{after node path={%
\ifx\tikzchainprevious\pgfutil@empty\else(\tikzchainprevious)%
edge[every join]#1(\tikzchaincurrent)\fi}}}
\tikzset{>=stealth',every on chain/.append style={join},
         every join/.style={->}}
\tikzset{
    >=stealth',
    punkt/.style={
           rectangle,
           rounded corners,
           draw=black, very thick,
           text width=6.5em,
           minimum height=2em,
           text centered},
    pil/.style={
           ->,
           thick,
           shorten <=2pt,
           shorten >=2pt,}
}
\newtheorem{thm}{Theorem}[section]
\newtheorem{proposition}[thm]{Proposition}
\theoremstyle{definition}
\newtheorem{example}[thm]{Example}
\newtheorem{remark}[thm]{Remark}
\newtheorem{definition}[thm]{Definition}
\font\black=cmbx10 \font\sblack=cmbx7 \font\ssblack=cmbx5 \font\blackital=cmmib10  \skewchar\blackital='177
\font\sblackital=cmmib7 \skewchar\sblackital='177 \font\ssblackital=cmmib5 \skewchar\ssblackital='177
\font\sanss=cmss12 \font\ssanss=cmss8 scaled 900 \font\sssanss=cmss8 scaled 600 \font\blackboard=msbm10
\font\sblackboard=msbm7 \font\ssblackboard=msbm5 \font\caligr=eusm10 \font\scaligr=eusm7 \font\sscaligr=eusm5
\font\bsymb=cmsy10 scaled\magstep2
\def\all#1{\setbox0=\hbox{\lower1.5pt\hbox{\bsymb
       \char"38}}\setbox1=\hbox{$_{#1}$} \box0\lower2pt\box1\;}
\def\exi#1{\setbox0=\hbox{\lower1.5pt\hbox{\bsymb \char"39}}
       \setbox1=\hbox{$_{#1}$} \box0\lower2pt\box1\;}
\def\sss#1{{\fam\ssfam\relax#1}}
\def\pmb#1{\setbox0\hbox{${#1}$} \copy0 \kern-\wd0 \kern.2pt \box0}
\def\pmbb#1{\setbox0\hbox{${#1}$} \copy0 \kern-\wd0
      \kern.2pt \copy0 \kern-\wd0 \kern.2pt \box0}
\def\pmbbb#1{\setbox0\hbox{${#1}$} \copy0 \kern-\wd0
      \kern.2pt \copy0 \kern-\wd0 \kern.2pt
    \copy0 \kern-\wd0 \kern.2pt \box0}
\def\pmxb#1{\setbox0\hbox{${#1}$} \copy0 \kern-\wd0
      \kern.2pt \copy0 \kern-\wd0 \kern.2pt
      \copy0 \kern-\wd0 \kern.2pt \copy0 \kern-\wd0 \kern.2pt \box0}
\def\pmxbb#1{\setbox0\hbox{${#1}$} \copy0 \kern-\wd0 \kern.2pt
      \copy0 \kern-\wd0 \kern.2pt
      \copy0 \kern-\wd0 \kern.2pt \copy0 \kern-\wd0 \kern.2pt
      \copy0 \kern-\wd0 \kern.2pt \box0}
\mathchardef\za="710B  
\mathchardef\zb="710C  
\mathchardef\zg="710D  
\mathchardef\zd="710E  
\mathchardef\zve="710F 
\mathchardef\zz="7110  
\mathchardef\zh="7111  
\mathchardef\zvy="7112 
\mathchardef\zi="7113  
\mathchardef\zk="7114  
\mathchardef\zl="7115  
\mathchardef\zm="7116  
\mathchardef\zn="7117  
\mathchardef\zx="7118  
\mathchardef\zp="7119  
\mathchardef\zr="711A  
\mathchardef\zs="711B  
\mathchardef\zt="711C  
\mathchardef\zu="711D  
\mathchardef\zvf="711E 
\mathchardef\zq="711F  
\mathchardef\zc="7120  
\mathchardef\zw="7121  
\mathchardef\ze="7122  
\mathchardef\zy="7123  
\mathchardef\zf="7124  
\mathchardef\zvr="7125 
\mathchardef\zvs="7126 
\mathchardef\zf="7127  
\mathchardef\zG="7000  
\mathchardef\zD="7001  
\mathchardef\zY="7002  
\mathchardef\zL="7003  
\mathchardef\zX="7004  
\mathchardef\zP="7005  
\mathchardef\zS="7006  
\mathchardef\zU="7007  
\mathchardef\zF="7008  
\mathchardef\zW="700A  
\newcommand{\be}{\begin{equation}}
\newcommand{\ee}{\end{equation}}
\newcommand{\bea}{\begin{eqnarray}}
\newcommand{\eea}{\end{eqnarray}}
\newcommand{\beas}{\begin{eqnarray*}}
\newcommand{\eeas}{\end{eqnarray*}}
\def\*{{\textstyle *}}
\def\Sec{\sss{Sec}}
\newcommand{\Ci}{C^{\infty}}
\begin{document}
\title{\bf Remarks on generalized Lie algebroids\\ and related concepts\thanks{Research funded by the  Polish National Science Centre grant under the contract number DEC-2012/06/A/ST1/00256.  }}
\date{Jan. 25, 2017}
\author{\\ KATARZYNA  GRABOWSKA$^1$\\ JANUSZ GRABOWSKI$^2$\\
        \\
         $^1$ {\it Faculty of Physics}\\
                {\it University of Warsaw}\\
                \\$^2$ {\it Institute of Mathematics}\\
                {\it Polish Academy of Sciences}
                }

\maketitle

\begin{abstract}{We prove that ``generalized Lie algebroid", a geometric object which appeared recently in the literature, is a misconception.}
\end{abstract}

\vspace{2mm} \noindent {\bf MSC 2010}: (primary) 17B66, 53D17;
 (secondary) 22A22, 53C15, 70G45, 70H33, 70S05.

\noindent{\bf Keywords}: Lie algebroid, Lie groupoid, Lie bracket, tangent bundle, vector bundle, anchor mapping.

\section{Introduction}
\emph{Lie algebroids} are objects which appear almost everywhere in contemporary geometry. They form a common generalization of a Lie algebra and tangent bundle $T M$, and are so common and natural that we are often working with them without mentioning it. Being related to almost all areas of geometry, like connection theory, cohomology theory, invariants of foliations and pseudogroups, symplectic and Poisson geometry, etc., and having a huge impact on mathematical physics, Lie algebroids became an object of extensive studies. As it is written in \cite{Gr1}, people told that they are using a Lie algebroid resemble Molier's Mr. Jourdain who was surprised and delighted to learn that he has been speaking prose all his life without knowing it.

The concept of Lie algebroid was introduced in 1967 by Pradines \cite{Pr}, as the basic infinitesimal invariant of a differentiable groupoid; the construction of the Lie algebroid
of a differentiable groupoid is a direct generalization of the construction of
the Lie algebra of a Lie group, and \cite{Pr} described a full Lie theory for differentiable
groupoids and Lie algebroids. In the same year the booklet \cite{Ne} by Nelson was published, where a general theory of \emph{Lie modules} together with a big part of the
corresponding differential calculus can be found.

A \emph{Lie pseudoalgebra},  a pure algebraic   counterpart of a Lie algebroid,
appeared first in the paper of Herz \cite{He} but one can find similar concepts under more than a dozen  of names  in the   literature (e.g., \emph{Lie module,  $(R,A)$-Lie
algebras, Lie-Cartan pair, Lie-Rinehart algebra, differential
algebra}, etc.).  We refer to a survey article by Mackenzie \cite{Mac}, his book \cite{Mac1}, the survey \cite{Gr1}, and references therein.

A Lie algebroid can be defined as a linear Poisson structure, a sort of de Rham derivative, a homological vector field on a supermanifold, a morphism of certain double vector bundles, etc., but the traditional definition is the following.

\begin{definition} A \emph{Lie algebroid} $(A, [\cdot,\cdot ]_A, \zr_A)$ is a smooth real
vector bundle $A$ over base $M$, with a real Lie algebra structure $[\cdot ,\cdot ]_A$ on the vector space $\Sec(A)$ of the smooth global sections of $A$, and a morphism of vector bundles  $\zr_A : A\to T M$ covering the identity on $M$  (called the anchor), such that
\be\label{LA}[X, fY ]_A = f[X, Y ]_A + \zr_A(X)(f)Y\,,
\ee
for all $X, Y \in\Sec(A)$ and $f\in \Ci(M)$.
\end{definition}
Sometimes, the axiom telling that $\zr_A$ induces a homomorphism of the Lie algebroid bracket and the bracket of vector fields,
$$\zr_A([X,Y]_A)=[\zr_A(X),\zr_A(Y)]\,,
$$
is added, but this axiom is a simple consequence of the above definition (cf. \cite[Theorem 2]{Gr}).

The classical Cartan differential calculus on a manifold  $M$, which can be viewed as associated with the canonical Lie algebroid structure on the tangent bundle $T M$ represented by the Lie bracket of vector fields, can be extended to calculus on a general Lie algebroid (cf. \cite{Mar}).
This includes the cohomology theory and the concept of characteristic classes which is of purely Lie algebroid nature, as observed by Kubarski \cite{Ku}.

The idea of developing Analytical Mechanics on Lie groupoids and algebroids, initiated by Weinstein \cite{We} and Libermann \cite{Li}, was elaborated by many authors, in the framework of Lie algebroid prolongations \cite{CaM,CMMD,dLMM,Mar}, as well as in the framework of Tulczyjew triples \cite{Gra06,Gra08,Gra11}. This concerns also the Lie algebroid version of the theory of optimal control \cite{CoM,GJ,Mar1,Po}.

Also the Lie theory for Lie groupoids and algebroids has been far developed, including the context of additional geometric structures  in the spirit of Drinfel'd \cite{Dri} (e.g. symplectic/Poisson Lie groupoids and Lie-bialgebroids) and the solution of the problem of integration \cite{CF}.

\medskip
It is not at all surprising that there appeared various generalizations of the concept of Lie algebroid, e.g. \emph{general}, \emph{skew}, and \emph{almost} Lie algebroids (see e.g. \cite{GJ,GU,GU1}, where the Jacobi identity is not assumed, or \emph{Loday} or \emph{Leibniz}
algebroids \cite{GKP,ILMP} where Jacobi is kept but not skew-symmetry. Applications of these structures to mechanics and optimal control have also been developed \cite{Gra06,Gra08,Gra11,GJ,GLMM}.

A whole branch of preprints (with substantial overlaps) devoted to certain generalizations of Lie algebroids and its applications, including theory of connections, mechanics and optimal control, was published by Arcu\c{s} and his collaborators \cite{A}--\cite{A11}. Roughly speaking, the generalization depends on relaxing the assumptions that the anchor map is a vector bundle morphism which is supported on the identity on the base manifold. Some of these papers appeared recently in respected journals of mathematical physics \cite{A8,A11}.

\medskip
The aim of this note is to show that these generalizations are illusory, as they actually reduce to the case of standard Lie algebroids. Hence, the whole theory of ``generalized Lie algebroids", together with the associated geometry and applications, is void and brings nothing to our understanding of geometry and physics. This concerns, in particular, papers \cite{A8,A11} and explains, why no natural (true) examples are offered.

\section{Generalized Lie algebras and algebroids}
In what follows, we will present the original definition of a \emph{generalized Lie algebroid}, as it is formulated in \cite{A11}. Since this definition refers to the concept of a \emph{Lie $\mathcal{F}(N)$-algebra}, which is not explained in \cite{A11}, we start with the latter following \cite{A10}. We cite the layout and definitions \emph{in extenso}, keeping the original style and notation of \cite{A10,A11}, to give the reader the flavour of the original presentation. Note only that in what follows we understand $\mathcal{F}$ as an associative commutative ring with a unit, and the triple $(F,\nu,N)$ as the vector bundle $\nu:F\rightarrow N$ (this is not specified in the cited papers). Our own comments are postponed to the next section.

\subsection{Generalized Lie algebras}
\begin{definition}\label{d1}
If $A$ is a $\mathcal{F}$-module such that there exists an
biaditive operation
\begin{equation*}
\begin{array}{ccc}
A\times A & ^{\underrightarrow{~\ \ [\ ,\ ] _{A}~\ \ }} & A \\
( u,v) & \longmapsto & [ u,v] _{A}%
\end{array}
,
\end{equation*}%
then we say that $( A, [ ,] _{A}) $ is a $\mathcal{F}$-algebra or algebra over $\mathcal{F}$.
\end{definition}
If $(A, [,]_A)$ is a $\mathcal{F}$-algebra such that the operation $%
[,]_A$ is associative (commutative), then  $(A, [,]_A)$ is called an
associative (commutative) $\mathcal{F}$-algebra. Moreover, $(A, [,]_A)$ is called a unitary $\mathcal{F}$-algebra, if the
operation $[,]_A$ has a unitary element.
\begin{remark}
In the above definition, if $\mathcal{F}$ is a commutative ring and $[,]_A$ is bilinear, then we have the classical definition
of algebra over a ring. Thus every classical $\mathcal{F}$-algebra is an $\mathcal{F}$-algebra, but the converse is not true. For example if $M$
is a manifold, then $(\chi(M), [,])$, where
\begin{equation*}
[X, Y](f)=X(Y(f))-Y(X(f)),\ \ \ \forall X, Y\in\chi(M),\ \ \forall f\in \mathcal{F}(M),
\end{equation*}
 is an $\mathcal{F}(M)$-algebra
but it is not a classical $\mathcal{F}(M)$-algebra (only a classical $%
\mathbb{R}$-algebra, where $\mathbb{R}$ is the field of real
numbers).
\end{remark}
\begin{definition}
If $\mathcal{F}$ is a ring, then the set $Der(
\mathcal{F}) $\ of groups morphisms $X:\mathcal{F\longrightarrow F}$
satisfying the condition 
\begin{equation*}
X( f\cdot g) =X( f) \cdot g+f\cdot X( g) ,~\forall f,g\in \mathcal{F},
\end{equation*}
will be called the set of derivations of  $\mathcal{F}$.
\end{definition}
If $M$ is a manifold, then it is easy to check that $Der( \mathcal{F}( M) )=\chi(M)$.
\begin{example}
If we consider the
biadditive operation
\begin{equation*}
\begin{array}{ccc}
Der( \mathcal{F}) \times Der( \mathcal{F}) & ^{\underrightarrow{~\ \ [\
,\ ]_{Der(\mathcal{F})} ~\ \ }} & Der( \mathcal{F}) \\
( X,Y) & \longmapsto & [ X,Y]_{Der(\mathcal{F})}%
\end{array}
,
\end{equation*}%
given by
\begin{equation*}
[ X,Y]_{Der(\mathcal{F})}( f) =X( Y( f) ) -Y( X( f) ) ,~\forall f\in
\mathcal{F},
\end{equation*}
then $( Der( \mathcal{F})  ,[ ,]_{Der(\mathcal{F})} ) $ is an $\mathcal{F}$-algebra.
\end{example}
\begin{definition}\label{Def12}
If $( A,[ ,] _{A}) $ is an $\mathcal{F}$-algebra such that $[ ,]
_{A} $ satisfies the conditions:

$LA_{1}.$ $[ u,u] _{A}=0,$ for any $u\in A$,

$LA_{2}.$ $[ u,[ v,z] _{A}] _{A}+[ z,[ u,v] _{A}] _{A}+[ v,[ z,u] _{A}]
_{A}=0,$ for any $u,v,z\in A$,\newline
then we will say that $( A,[ ,] _{A}) $ is a Lie $\mathcal{F}$%
-algebra or Lie algebra over $\mathcal{F}$.
\end{definition}
\begin{example}
It is easy to check that the $\mathcal{F}$-algebra $( Der( \mathcal{F})
,[,]_{Der(\mathcal{F})}) $ is a Lie $\mathcal{F}$-algebra.
\end{example}
Using Definition \ref{Def12} we deduce the following:
\begin{proposition}
If $( A,[ ,] _{A}) $ is a Lie $\mathcal{F}$-algebra, then we have:

1. $[ u,v] _{A}=-[ v,u] _{A},$ for any $u,v\in A$,

2. $[ u,0] _{A}=0,$ for any $u\in A$,

3. $[ -u,v] _{A}=-[ u,v] _{A}=[ u,-v] _{A}$, for any $u,v\in A.$
\end{proposition}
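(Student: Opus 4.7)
The plan is to derive all three properties from axiom $LA_1$ combined with biadditivity alone; the Jacobi identity $LA_2$ is not needed, and no division takes place, so the argument works over an arbitrary ring $\mathcal{F}$.

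First, I would establish item~1 by polarizing $LA_1$. Applying the alternating axiom to the sum $u+v$ and expanding using biadditivity in both slots gives
\[
0 \;=\; [u+v,\, u+v]_A \;=\; [u,u]_A + [u,v]_A + [v,u]_A + [v,v]_A.
\]
The two diagonal terms vanish by $LA_1$, which leaves $[u,v]_A + [v,u]_A = 0$.

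Next, for item~2, biadditivity in the second argument yields $[u,0]_A = [u,\,0+0]_A = [u,0]_A + [u,0]_A$; since $(A,+)$ is an abelian group (as an $\mathcal{F}$-module), cancellation gives $[u,0]_A = 0$. For item~3, biadditivity in the first slot provides $[u,v]_A + [-u,v]_A = [u+(-u),\,v]_A = [0,v]_A = 0$, where the last equality follows by combining item~1 with item~2 (or by repeating the argument of item~2 in the first slot); hence $[-u,v]_A = -[u,v]_A$. The other equality $[u,-v]_A = -[u,v]_A$ is obtained analogously from biadditivity in the second slot: $[u,v]_A + [u,-v]_A = [u,\,v+(-v)]_A = [u,0]_A = 0$.

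There is no serious obstacle here: the three assertions are purely formal consequences of biadditivity together with the alternating condition $LA_1$, mirroring the textbook derivation in the classical Lie algebra setting. The only subtlety worth flagging is that item~1 (anti-commutativity) is in general strictly weaker than $LA_1$, so one must use the full alternating axiom rather than merely anti-commutativity at the outset; this is why polarization is the natural starting point.
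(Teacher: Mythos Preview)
Your argument is correct: polarizing $LA_1$ via biadditivity yields anti-commutativity, and the remaining items follow from the additive group structure exactly as you write. The paper itself offers no proof beyond the sentence ``Using Definition~\ref{Def12} we deduce the following'', so your derivation simply fills in the routine details that the (quoted) source leaves implicit; there is nothing to compare.
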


\begin{definition}
Let $ A $ be an $\mathcal{F}$-module. If there exists a modules
morphism $\rho $ from $ A$ to $Der( \mathcal{F}) $
and a biadditive operation
\begin{equation*}
\begin{array}{ccc}
A\times A & ^{\underrightarrow{~\ \ [\ ,\ ] _{A}~\ \ }} & A \\
( u,v) & \longmapsto & [ u,v] _{A}%
\end{array}
,
\end{equation*}%
satisfies in
\begin{equation}
[ u,fv] _{A}=f[ u,v] _{A}+\rho ( u) ( f) \cdot v,
\end{equation}%
for any $u,v\in A$ and $f\in \mathcal{F}$ such that $( A,[ ,]
_{A}~) $ is a Lie $\mathcal{F}$-algebra, then $( A ,[
,] _{A}, \rho ) $ is called a generalized Lie $\mathcal{F}$-algebra or
generalized Lie algebra over $ \mathcal{F}$.
\end{definition}

\subsection{Generalized Lie algebroids}
Let $(F,\nu ,N)$ be a vector bundle, $\Gamma ( F, \nu, N)$ be the set of the sections of it and $\mathcal{F}(
N)$ be the set of smooth real-valued functions on $N$. Then $(\Gamma
(F,\nu ,N) ,+,\cdot)$ is a $\mathcal{F}(N)$-module. If $(\varphi ,\varphi _{0})$ is a morphism from $( F,\nu ,N)$ to $(F', \nu', N')$ such that $\varphi _{0}$ is a isomorphism from $N$ to $N'$, then using the operation
\begin{equation*}
\begin{array}{ccc}
\mathcal{F}\left( N\right) \times \Gamma \left(F', \nu', N'\right) & ^{\underrightarrow{~\ \ \cdot ~\ \ }} & \Gamma \left(
F', \nu', N'\right),\\
\left( f,u^{\prime }\right) & \longmapsto & f\circ \varphi _{0}^{-1}\cdot
u^{\prime },
\end{array}%
\end{equation*}%
it results that $(\Gamma( F', \nu', N') ,+,\cdot)$ is a $\mathcal{F}(N)$-module and the modules morphism
\begin{equation*}
\begin{array}{ccc}
\Gamma \left(F,\nu ,N\right) & ^{\underrightarrow{~\ \ \Gamma \left(
\varphi ,\varphi _{0}\right) ~\ \ }} & \Gamma \left( F', \nu', N'\right),\\
u & \longmapsto & \Gamma \left( \varphi ,\varphi _{0}\right) u,
\end{array}%
\end{equation*}%
defined by
\begin{equation*}
\begin{array}{c}
\Gamma ( \varphi ,\varphi _{0}) u(y) =\varphi ( u_{\varphi
_{0}^{-1}(y)}) =( \varphi \circ u\circ
\varphi _{0}^{-1})(y),%
\end{array}%
\end{equation*}%
for any $y\in N'$ will be obtained.
\begin{definition}\label{AP}
A generalized Lie algebroid is a vector bundle $(F,\nu ,N)$ given by the diagram
\begin{equation}
\begin{array}{c}
\begin{array}[b]{ccccc}
\left( F,\left[ ,\right] _{F,h}\right) & ^{\underrightarrow{~\ \ \ \rho \ \
\ \ }} & \left( TM,\left[ ,\right] _{TM}\right) & ^{\underrightarrow{~\ \ \
Th\ \ \ \ }} & \left( TN,\left[ ,\right] _{TN}\right)\\
~\downarrow \nu &  & ~\ \ \downarrow \tau _{M} &  & ~\ \ \downarrow \tau _{N}
\\
N & ^{\underrightarrow{~\ \ \ \eta ~\ \ }} & M & ^{\underrightarrow{~\ \ \
h~\ \ }} & N
\end{array}
\end{array}
\end{equation}
where $h$ and $\eta $ are arbitrary isomorphisms, $(\rho, \eta): (F,\nu, N)\longrightarrow (TM,\tau _{M},M)$ is a vector bundles morphism and
\begin{equation*}
\begin{array}{ccc}
\Gamma \left( F,\nu ,N\right) \times \Gamma \left( F,\nu ,N\right) & ^{%
\underrightarrow{~\ \ \left[ ,\right] _{F,h}~\ \ }} & \Gamma \left( F,\nu
,N\right),\\
\left( u,v\right) & \longmapsto & \ \left[ u,v\right] _{F,h},
\end{array}
\end{equation*}
is an operation satisfies in
\begin{equation}\label{ganchor}
\begin{array}{c}
\left[ u,f\cdot v\right] _{F,h}=f\left[ u,v\right] _{F,h}+\Gamma \left(
Th\circ \rho ,h\circ \eta \right) \left( u\right) f\cdot v,\ \ \ \forall f\in \mathcal{F}(N),
\end{array}
\end{equation}
such that the 4-tuple $(\Gamma( F, \nu, N) ,+,\cdot, [ , ] _{F,h})$ is a Lie $\mathcal{F}(N)$-algebra.
\end{definition}
We denote by $\Big((F, \nu, N), [ , ] _{F,h}, (\rho, \eta) \Big)$ the generalized Lie algebroid defined in the above. Moreover, the couple $\Big([ , ]
_{F,h}, (\rho, \eta)\Big)$ is called the \emph{generalized
Lie algebroid structure}. It is easy to see that in the above definition, the modules morphism $\Gamma(Th\circ\rho, h\circ \eta) : (\Gamma(F,\nu ,N), +, \cdot, [ , ] _{F,h})\longrightarrow(\Gamma(TN, \tau_{N}, N), +, \cdot, [ , ]_{TN})$ is a Lie algebras morphism.
\begin{definition}
A morphism from
$
(( F,\nu ,N), [ , ] _{F,h}, (\rho, \eta))
$
to
$
(( F^{\prime }, \nu ^{\prime }, N^{\prime }), [ , ]
_{F^{\prime }, h^{\prime }}, (\rho ^{\prime }, \eta ^{\prime }))
$
is a morphism $(\varphi ,\varphi _{0})$ from $(F,\nu, N)$ to $(F',\nu', N')$ such that $\varphi _{0}$ is an isomorphism from $N$ to $N'$, and the modules morphism $\Gamma(\varphi, \varphi _{0})$
is a Lie algebras morphism from
$
( \Gamma \left( F,\nu ,N\right) ,+,\cdot ,\left[ ,\right] _{F,h})
$
to
$
( \Gamma \left( F^{\prime },\nu ^{\prime },N^{\prime }\right) ,+,\cdot
, \left[ ,\right] _{F^{\prime },h^{\prime }}).
$
\end{definition}
\begin{remark}
In particular case $\left( \eta ,h\right) =\left(
Id_{M},Id_{M}\right)$, the definition of the Lie algebroid will be obtained.
\end{remark}

\section{Why it does not work}
Let us start with the remark that Definition \ref{d1} does not make much sense, since
it just collects two unrelated objects on $A$: a bi-additive operation $[, ]_A$ and an
$\mathcal{F}$-module structure. Usually in mathematics one considers some compatibility conditions between various structures, so the traditional definition of an $\mathcal{F}$-algebra seems to be better.
Sharing the authors opinion, we could as well ``generalize'' the concept of a topological vector space assuming that we deal with structures of a topological space and a vector space which are completely unrelated.

\medskip
Second, the authors ``generalized Lie algebra" is actually a particular case of a Lie pseudoalgebra (Lie-Rinehart algebra) mentioned in the introduction.

\medskip
Trying to guess what and why the authors require in Definition \ref{AP}, let us just mention that
what is really used to express the algebraic properties of the bracket $[, ]_{F,h}$ is only the composition $\Phi=Th\circ\rho$, covering $\phi=h\circ\eta$ and playing the role of the anchor map, so that working with two diffeomorphic base manifold is completely unmotivated. Moreover, any vector bundle morphism $\Phi:F\to TN$, covering a certain $\phi: N\to N$, can always be written in the form $\Phi=Th\circ\rho$ for a given diffeomorphism $h:M\to N$ and a vector bundle morphism $\zr:F\to TM$\,; just take $\rho=(Th)^{-1}\circ\Phi$.

The condition (\ref{ganchor}) can then be rewritten as
\begin{equation}\label{ganchor1}
\left[ u,f\cdot v\right] _\Phi=f\left[ u,v\right] _\Phi+\tilde\Phi\left( u\right) f\cdot v,\ \ \ \forall f\in \mathcal{F}(N)\,,
\end{equation}
where we logically write $[, ]_\Phi$ instead of $[,]_{F,h}$ and $\widetilde{\Phi}$ for the map
$\Gamma \left( Th\circ \rho ,h\circ \eta \right) $ induced by $\Phi$ on sections.

Hence, the only relevant difference with the standard definition of a Lie algebroid is that the anchor map covers a diffeomorphisms which can differ from the identity. This is, however, an illusory generalization, since in (\ref{ganchor1}) we can replace $\Phi$ with a vector bundle morphism $\Psi:F\to TN$, this time covering the identity, without changing the bracket. It is possible, because  the diffeomorphism $\phi$ must be identical on the support of the ``anchor map" $\Phi$. Outside the support of $\Phi$ we can always change base points for free, but in geometrically interesting cases, i.e. when the anchor does not vanish on an open and dense subset of $N$, it must cover the identity. We can formulate this observation as follows.
\begin{thm}\label{t-main}
If \,$[, ]_\Phi$ is a Lie bracket on sections of a vector bundle $\zn:F\to N$ such that (\ref{ganchor1}) is satisfied for a vector bundle morphism $\Phi:F\to TN$ covering a diffeomorphism $\phi:N\to N$, then $[, ]_\Phi$ is a Lie algebroid bracket on $F$ with respect to a ``traditional" anchor map $\Psi:F\to TN$ covering the identity, $\Psi=(T\phi)^{-1}\circ\Phi$.
\end{thm}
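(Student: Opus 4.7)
My strategy is to prove that $\widetilde\Phi(u)=\widetilde\Psi(u)$ as vector fields on $N$ for every section $u$ of $F$; since $\Psi:F\to TN$ covers $\operatorname{id}_N$, the hypothesis \eqref{ganchor1} then reads as the standard Lie algebroid Leibniz identity with anchor $\Psi$, making $(F,[,]_\Phi,\Psi)$ a genuine Lie algebroid. The crucial geometric observation is that the diffeomorphism $\phi$ is forced to coincide with the identity on a neighbourhood of every point where $\Phi$ is nonzero, so the two candidate anchors differ only in regions where they both vanish.

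Setting up the argument, the definition of $\widetilde\Phi$ at once gives $\widetilde\Phi(fu)=(f\circ\phi^{-1})\widetilde\Phi(u)$. Expanding $[fu,gv]_\Phi$ in two ways --- once by applying \eqref{ganchor1} directly in the second slot, and once by first swapping the arguments via antisymmetry and then using \eqref{ganchor1} --- and equating the two outcomes produces the consistency identity
\[
(f\circ\phi^{-1}-f)\,\widetilde\Phi(u)g\cdot v \;=\; (g-g\circ\phi^{-1})\,\widetilde\Phi(v)f\cdot u
\]
for all $u,v\in\Gamma(F,\nu,N)$ and $f,g\in\mathcal F(N)$. Fix now a point $y\in N$ with $x:=\phi^{-1}(y)\neq y$ and select a bump function $f$ equal to $1$ on a neighbourhood of $x$ and to $0$ on a neighbourhood of $y$: then $f(x)=1$, $f(y)=0$ and $df_y=0$, so the right-hand side of the consistency identity vanishes at $y$ and what remains reads $dg_y\bigl(\Phi(u(x))\bigr)\cdot v(y)=0$ for all remaining $g,u,v$. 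Choosing $v$ with $v(y)\neq 0$, letting $dg_y$ run freely through $T^*_yN$, and finally varying $u$ so that $u(x)$ covers the whole fibre $F_x$ forces $\Phi|_{F_x}=0$. By smoothness of $\Phi$ this vanishing extends to the closure of the non-fixed open set, so $\{y:\Phi|_{F_y}\neq 0\}\subseteq\operatorname{int}\mathrm{Fix}(\phi)$, and on this interior $\phi$ is locally the identity, so $T\phi=\operatorname{id}$ there.

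A short case check finishes the proof. For $y\in\operatorname{int}\mathrm{Fix}(\phi)$ one has $\phi^{-1}(y)=y$ and $T_y\phi=\operatorname{id}$, whence $\widetilde\Phi(u)(y)=\Phi(u(y))=\widetilde\Psi(u)(y)$. For $y\notin\operatorname{int}\mathrm{Fix}(\phi)$ both $y$ and $\phi^{-1}(y)$ lie in the closed set where $\Phi$ vanishes on the fibre (using that $\phi^{-1}$ preserves $\operatorname{int}\mathrm{Fix}(\phi)$ since $\phi$ is the identity there), so both $\widetilde\Phi(u)(y)$ and $\widetilde\Psi(u)(y)$ are zero. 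Thus $\widetilde\Phi(u)=\widetilde\Psi(u)$ globally and \eqref{ganchor1} turns into $[u,fv]_\Phi=f[u,v]_\Phi+\widetilde\Psi(u)f\cdot v$, the standard Lie algebroid Leibniz rule. The main obstacle is the middle step: one has to spot the algebraic identity produced by the two-way expansion of $[fu,gv]_\Phi$ and then pick the test functions so that one side collapses while the other pins down the vanishing of $\Phi$ at non-fixed points; after that the extension by continuity and the comparison of the two vector fields are essentially bookkeeping.
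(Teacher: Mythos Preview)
Your proof is correct and follows essentially the same route as the paper's: both arguments expand a double product $[fu,gv]_\Phi$ using \eqref{ganchor1} together with skew-symmetry, and then choose a test function separating a point from its $\phi$-image to force $\Phi$ to vanish on fibres over the non-fixed set of $\phi$ (hence, by continuity, on its closure). The only cosmetic difference is that the paper invokes locality of the bracket to kill $[gu,fv]_\Phi(x)$ directly, whereas you subtract the two expansions first to obtain your ``consistency identity'' and then use $df_y=0$ to kill one side; the conclusions and the remaining bookkeeping are identical.
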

\begin{proof}
The existence of an anchor, i.e. (\ref{ganchor1}), implies that $[, ]_\Phi$ is \emph{local} with respect to the second argument, i.e. $\left[ u,f\cdot v\right] _\Phi$ vanishes at $x\in N$ if $f$ vanishes in a neighbourhood of $x$. As the bracket is skew-symmetric, it is local also with respect to the first argument, so $[g\cdot u,f\cdot v]_\Phi$ must vanish at $x\in N$ if $g$ vanishes in a neighbourhood of $x$, that implies that the anchor acting on sections is a local operator, so covers a diffeomorphism which is a local operator, thus identity.

More precisely, it suffices to prove that $\Phi$ and $\Psi$ induce the same maps on sections, $\widetilde{\Psi}=\widetilde{\Phi}$.
For, we will prove that $\Phi$
is zero as a linear map
\begin{equation}\label{e1}\Phi_{\phi^{-1}(x)}:F_{\phi^{-1}(x)}\to T_x N
\end{equation}
for points $x$ of $\operatorname{supp}(\Phi)$ (being the closure of the set $\{ y\in N\,|\, \phi(y)\ne y\}$), so
$$\Psi_{\phi^{-1}(x)}=(T\phi)^{-1}_x\circ\Phi_{\phi^{-1}(x)}$$
is zero as well.

Assume that $\phi(x)\ne x$. Then, one can find a smooth function $g$ on $N$ such that
$g$ vanishes in a neighbourhood of $x$ and $g\circ\phi^{-1}(x)=1$.
Using (\ref{ganchor1}) and skew-symmetry, we get
\begin{eqnarray}\label{e2}
[g\cdot u,f\cdot v]_\Phi&=&f[g\cdot u,v]_\Phi(x)+(g\circ \phi^{-1})\cdot\widetilde{\Phi}(u)f\cdot v\\
&=&fg[u,v]_\Phi-\widetilde{\Phi}(f\cdot v)g\cdot u+(g\circ \phi^{-1})\cdot\widetilde{\Phi}(u)f\cdot v\,.\nonumber
\end{eqnarray}
Here we used
$$\widetilde{\Phi}(g\cdot u)=(g\circ \phi^{-1})\widetilde{\Phi}(u)\,.
$$
As $g$ vanishes in a neighbourhood of $x$, the left-hand side of (\ref{e2}) vanishes at $x$. Moreover, $(g\circ\phi^{-1})(x)=1$ by the choice of $g$, thus
$\widetilde{\Phi}(u)f\cdot v$
vanishes at $x$ for any $u,v$ and $f$.
We conclude that the vector field $\widetilde{\Phi}(u)$ vanishes at $x$ for any $u$, which clearly implies that the linear map (\ref{e1}) is $0$.
\end{proof}

\begin{remark}
Of course, the possibility of changing the anchor by a diffeomorphism supported on its kernel is completely irrelevant from the point of view of the bracket, so in Definition \ref{AP} we deal with a genuine Lie algebroid over $N$ with a presence of the diffeomorphism $h$. However, this sort of a generalization is in our opinion useless.
\end{remark}

{
\begin{remark} One can try to redeem the whole concept assuming that the module structure $(f,v)\mapsto f\cdot v$ in (\ref{ganchor1}) is replaced by $(f,v)\mapsto f\odot v=(f\circ\phi^{-1})\cdot v$, but then we again deal with the standard Lie algebroid on ${F}$, this time with respect to the vector bundle projection $\phi\circ\zn:F\to N$.
\end{remark}
}

\begin{remark}
It can be shown (cf. \cite{Gr}) that any Lie bracket on sections of a vector bundle $F$ such that (this  covers condition (\ref{ganchor1}))
$$[u,f\cdot v]_\Phi=f[u,v]_\Phi+A(u,f)\cdot v\,,$$
where $A(u,f)$ is a function on $N$, is a Lie algebroid except for $F$ being of rank one. If $F$ is a rank 1 vector bundle, then we deal with a Jacobi bundle (Poisson or Jacobi bracket). The bracket admits an ``anchor'' which is a first-order differential operator and we deal with a rank 1 Lie algebroid if and only if this anchor is actually of order zero. However, in this picture all differential operators representing the anchor cover the identity.
\end{remark}


\small{\vskip1cm

\noindent Katarzyna GRABOWSKA\\
Faculty of Physics\\
                University of Warsaw} \\
               Pasteura 5, 02-093 Warszawa, Poland
                 \\Email: konieczn@fuw.edu.pl \\

\noindent Janusz GRABOWSKI\\ Polish Academy of Sciences\\ Institute of
Mathematics\\ \'Sniadeckich 8, 00-656 Warsaw,
Poland\\Email: jagrab@impan.pl \\

\end{document}